\documentclass[a4paper,12pt,reqno]{amsart}
\usepackage{amsmath,amsthm,amssymb}
\usepackage{graphicx, color}
\usepackage[numbers,sort&compress]{natbib}
\setlength{\textwidth}{16cm}
\setlength{\textheight}{21.5cm} \setlength{\oddsidemargin}{0cm}
\setlength{\topmargin}{-0.5cm} \setlength{\evensidemargin}{0cm}
\setlength{\footskip}{14pt}
\setlength{\paperwidth}{210mm} \setlength{\paperheight}{297mm}
\setlength{\oddsidemargin}{0mm} \setlength{\evensidemargin}{0mm}
\setlength{\topmargin}{-20mm} \setlength{\headheight}{10mm}
\setlength{\headsep}{3mm} \setlength{\textwidth}{160mm}
\setlength{\textheight}{240mm} \setlength{\footskip}{15mm}
\setlength{\marginparwidth}{0mm} \setlength{\marginparsep}{0mm}

\nonstopmode \numberwithin{equation}{section}

\newtheorem{theorem}{Theorem}[section]

\newtheorem{remark}{Remark}[section]

\allowdisplaybreaks

\allowdisplaybreaks

\begin{document}

\title[A new extension of Hurwitz-Lerch Zeta function]
 {A new extension of Hurwitz-Lerch Zeta function}

\author[G. Rahman,   K. S. Nisar, M. Arshad ]{Gauhar Rahman,  Kottakkaran Sooppy Nisar*,  Muhammad Arshad}

\address{G. Rahman: Department of Mathematics, International Islamic University, Islamabad, Pakistan}
\email{gauhar55uom@gmail.com}

\address{ K. S. Nisar: Department of Mathematics, College of Arts and
Science at Wadi Aldawaser, 11991, Prince Sattam bin Abdulaziz University, Alkharj, Kingdom of Saudi Arabia}
\email{n.sooppy@psau.edu.sa, ksnisar1@gmail.com}
\address{M. Arshad: Department of Mathematics, International Islamic University, Islamabad, Pakistan}
\email{marshad$_-$zia@yahoo.com}


\subjclass[2010] {11M06, 11M35, 11B68, 33B15, 33C60, 33C05, 33C90}

\keywords{ Extended beta function,  extended hypergeometric function, generalized Hurwitz-Lerch Zeta function, extended Hurwitz-Lerch Zeta function,  Mellin transform, summation formula}

\thanks{*Corresponding author}

\begin{abstract}
The main objective of this paper is to introduce a new extension of Hurwitz-Lerch Zeta function in terms of extended beta function. We then investigate its important properties such as integral representations, differential formulas,  Mellin transform and certain generating relations. Also, some important special cases of the main results are pointed out.
\end{abstract}

\maketitle

\section{Introduction}\label{Intro1}
The Hurwitz-Lerch Zeta function  and its integral representation are respectively defined by (see e.g., \cite{Erd1953} pp. 27, \cite{Sari2001}, pp. 121 and \cite{Sari2012}, pp. 194) as
\begin{eqnarray}\label{hur1}
\Phi(z,\sigma,a)=\sum_{n=0}^\infty\frac{z^n}{(n+a)^\sigma},
\end{eqnarray}
 $$(a\in\mathbb{C}\setminus \mathbb{Z}_0^-,\, \sigma\in\mathbb{C}\,\,\text{when} |z|<1\,; \Re(\sigma)>1\,\, \text{when}\, |z|=1).$$
  \begin{eqnarray}\label{hur2}
\Phi(z,\sigma,a)=\frac{1}{\Gamma(\sigma)}\int_0^\infty\frac{t^{\sigma-1}e^{-at}}{1-ze^{-t}}dt
=\frac{1}{\Gamma(\sigma)}\int_0^\infty\frac{t^{\sigma-1}e^{-(a-1)t}}{e^{t}-z}dt
\end{eqnarray}
$$(\Re(\sigma)>0\,,\Re(a)>0\,\, \text{when}\, |z|\leq1(z\neq1)\,; \Re(\sigma)>1\,\, \text{when}\, z=1).$$

Various generalization and extensions of the Hurwitz-Lerch Zeta function $\Phi(z,\sigma,a)$ have been studied by various researchers (see \cite{Chaudhry2001,Choi2008,Erd1953,Erd1955}. Goyal and Laddha \cite{Goyal} defined the following  Hurwitz-Lerch Zeta function and its integral representation by
\begin{eqnarray}\label{hur3}
\Phi_\upsilon^*(z,\sigma,a)=\sum_{n=0}^\infty\frac{(\upsilon)_n}{n!}\frac{z^n}{(n+a)^\sigma},
\end{eqnarray}
$$(\upsilon\in\mathbb{C}\,,a\in\mathbb{C}\setminus \mathbb{Z}_0^-,\,  \sigma\in\mathbb{C}\,\, \text{when}\, |z|<1\,, \Re(\sigma-\upsilon)>1\,\, \text{when}\, |z|=1).$$
  \begin{eqnarray}\label{hur4}
\Phi_\upsilon^*(z,\sigma,a)=\frac{1}{\Gamma(\sigma)}\int_0^\infty\frac{t^{\sigma-1}e^{-at}}{(1-ze^{-t})^{\upsilon}}dt
=\frac{1}{\Gamma(\sigma)}\int_0^\infty\frac{t^{\sigma-1}e^{-(a-\upsilon)t}}{(e^{t}-z)^\upsilon}dt
\end{eqnarray}
$$(\Re(\sigma)>0\,,\Re(a)>0\,\, \text{when}\, |z|\leq1\,(z\neq1)\,; \Re(\sigma)>1 \,\text{when}\, z=1).$$
Garg et al. \cite{Garg} defined an extension of (\ref{hur1}) and \eqref{hur3} and its integral representation as;
\begin{eqnarray}\label{hur5}
\Phi_{\gamma,\vartheta;\upsilon}(z,\sigma,a)=\sum_{n=0}^\infty\frac{(\gamma)_n(\vartheta)_n}{(\upsilon)_nn!}\frac{z^n}{(n+a)^\sigma},
\end{eqnarray}
$$(\gamma,\,\vartheta,\, \upsilon\in\mathbb{C}\,,a\in\mathbb{C}\setminus \mathbb{Z}_0^-,\, \sigma\in\mathbb{C}\,\, \text{when}\, |z|<1\,, \Re(\sigma+\upsilon-\gamma-\vartheta)>1\, \text{when}\, |z|=1).$$
\begin{eqnarray}\label{hur6}
\Phi_{\gamma,\vartheta;\upsilon}(z,\sigma,a)=\frac{1}{\Gamma(\sigma)}\int_0^\infty t^{\sigma-1}e^{-at}\,_2F_1\Big(\gamma,\vartheta;\upsilon;ze^{-t}\Big)dt
\end{eqnarray}
$$(\Re(\sigma)>0\,,\Re(a)>0\,\, \text{when}\, |z|\leq1\,(z\neq1)\,; \Re(\sigma)>1 \,\text{when}\, z=1).$$

An interesting extension of Hurwitz-Lerch Zeta function and its integral representation are found in \cite{Parmar} as
\begin{eqnarray}\label{hur7}
\Phi_{\gamma,\vartheta;\upsilon}(z,\sigma,a;p)=\sum_{n=0}^\infty\frac{(\gamma)_nB(\vartheta+n,\upsilon-\vartheta;p)}{B(\vartheta,\upsilon-\vartheta)n!}\frac{z^n}{(n+a)^\sigma},
\end{eqnarray}
$p\geq0\,,\gamma,\,\vartheta,\, \upsilon\in\mathbb{C}\,,a\in\mathbb{C}\setminus \mathbb{Z}_0^-,\, \sigma\in\mathbb{C}$ \,\text{when}\, $|z|<1$, $\Re(\sigma+\upsilon-\gamma-\vartheta)>1$\, \text{when}\, $|z|=1$ and $B(x,y;p)$ is the  extended beta function is defined by  \cite{Chaudhry1997} as
\begin{eqnarray}\label{Ebeta}
B(x,y;p)=B_p(x,y)=\int\limits_{0}^{1}t^{x-1}(1-t)^{y-1}
e^{-\frac{p}{t(1-t)}}dt
\end{eqnarray}
where $\Re(p)>0, \Re(x)>0, \Re(y)>0$ respectively. When $p=0$, then $B(x,y;0)=B(x,y)$.
\begin{eqnarray}\label{hur8}
\Phi_{\gamma,\vartheta;\upsilon}(z,\sigma,a)=\frac{1}{\Gamma(\sigma)}\int_0^\infty t^{\sigma-1}e^{-at}F_p\Big(\gamma,\vartheta;\upsilon;ze^{-t}\Big)dt
\end{eqnarray}
$(p\geq0\,, \Re(\sigma)>0\,,\Re(a)>0\,\, \text{when} |z|\leq1\,(z\neq1)\,; \Re(\sigma)>1$ when $z=1)$, and $F_p$ is the extended hypergeometric function (see \cite{Chaudhry2004}).

Very recently Shadab et al.  \cite{Choi2017} introduced a new and modified extension of beta function as:
\begin{eqnarray}\label{Sbeta}
B^{\lambda}_{p}(x,y)=B(x,y;p,\lambda)=\int_0^1t^{x-1}(1-t)^{y-1}E_{\lambda}\Big(-\frac{p}{t(1-t)}\Big)dt,
\end{eqnarray}
where $\Re(x)>0$, $\Re(y)>0$ and $E_{\lambda}\Big(.\Big)$ is Mittag-Leffler function defined by
\begin{eqnarray}
E_{\lambda}\Big(z\Big)=\sum_{n=0}^\infty\frac{z^n}{\Gamma(\lambda n+1)}.
\end{eqnarray}
Obviously, when $\lambda=1$  then $B^{1}_{p}(x,y)=B_p(x,y)$ is the extended beta function (see\cite{Chaudhry1997}). Similarly, when when $\lambda=1$ and $p=0$,  then $B^{1}_{0}(x,y)=B_0(x,y)$ is the classical beta function.

The extended hypergeometric function and its integral representation due to \cite{Choi2017} is defined by
\begin{align}\label{Shyper}
F_{p}^{\lambda}(\sigma_1,\sigma_2;\sigma_3;z)={}_2F_{1}\Big(\sigma_1,\sigma_2;\sigma_3;z;p,\lambda\Big)\notag&=
\sum_{n=0}^\infty(\sigma_1)_n\frac{B_{p}^{\lambda}(\sigma_2+n,\sigma_3-\sigma_2)}{B(\sigma_2,\sigma_3-\sigma_2)}
\frac{z^n}{n!}\notag\\
=&\sum_{n=0}^\infty(\sigma_1)_n\frac{B(\sigma_2+n,\sigma_3-\sigma_2; p,\lambda)}{B(\sigma_2,\sigma_3-\sigma_2)}
\frac{z^n}{n!},
\end{align}
where $p,\lambda\geq0$, $\sigma_1,\sigma_2,\sigma_3\in\mathbb{C}$  and $|z|<1$.

Many more generalizations and extensions on may be referred to the recent research papers (see \cite{Mubeen2017,Arshad1,Choi2014,Gauhar,Gauhar1,Mubeena2017,Ozarslan,Ozergin,Parmar2013})
\section{Extension of Hurwitz-Lerch Zeta function and its properties}\label{sec-2}
In this section, we introduce a new extension of Hurwitz-Lerch Zeta function in term of extended beta function (\ref{Sbeta}).
\begin{align}\label{hur9}
\Phi_{\gamma,\vartheta;\upsilon}(z,\sigma,a;p,\lambda)&=\Phi_{\gamma,\vartheta;\upsilon}^\lambda(z,\sigma,a;p)\notag\\
=&\sum_{n=0}^\infty\frac{(\gamma)_nB(\vartheta+n,\upsilon-\vartheta;p,\lambda)}{B(\vartheta,\upsilon-\vartheta)n!}\frac{z^n}{(n+a)^\sigma}\notag\\
=&\sum_{n=0}^\infty\frac{(\gamma)_nB_p^\lambda(\vartheta+n,\upsilon-\vartheta)}{B(\vartheta,\upsilon-\vartheta)n!}\frac{z^n}{(n+a)^\sigma}
\end{align}
$(p\geq0\,,\lambda>0\,,\gamma,\,\vartheta,\, \upsilon\in\mathbb{C}\,,a\in\mathbb{C}\setminus \mathbb{Z}_0^-,\, \sigma\in\mathbb{C}$ when $|z|<1$, $\Re(\sigma+\upsilon-\gamma-\vartheta)>1$ when $|z|=1)$.
\begin{remark}
 The following special cases can be derived from extension of Hurwitz-Lerch Zeta function $\Phi_{\gamma,\vartheta;\upsilon}(z,\sigma,a;p,\lambda)$ defined in \eqref{hur9}.
\end{remark}

\textbf{Case 1.} If we consider $\gamma=1$, then \eqref{hur9} will lead to another extension of generalized Hurwitz-Lerch Zeta function $\Phi_{\vartheta;\upsilon}^{1,1}(z,\sigma,a;p,\lambda)$ introduced earlier by Lin and Srivastava \cite{Lin} with $\rho=\sigma=1$.

\begin{align}\label{hur10}
\Phi_{\vartheta;\upsilon}^{1,1}(z,\sigma,a;p,\lambda)&=\Phi_{1,\vartheta;\upsilon}(z,\sigma,a;p,\lambda)\notag\\
=&\sum_{n=0}^\infty\frac{B(\vartheta+n,\upsilon-\vartheta;p,\lambda)}{B(\vartheta,\upsilon-\vartheta)n!}\frac{z^n}{(n+a)^\sigma}
\end{align}
$(p\geq0\,,\lambda>0\,,\vartheta,\, \upsilon\in\mathbb{C}\,,a\in\mathbb{C}\setminus \mathbb{Z}_0^-,\, \sigma\in\mathbb{C}$ when $|z|<1$, $\Re(\sigma+\upsilon-\gamma-\vartheta)>1$ when $|z|=1)$.

\textbf{Case 2.} If we set $\gamma=\upsilon=1$ in \eqref{hur9}, then we get a new extension of  Hurwitz-Lerch Zeta function $\Phi_{\vartheta}^{*}(z,\sigma,a;p,\lambda)$ which is the extension of the function defined by Goyal and Laddha \cite{Goyal} as:
\begin{align}\label{hur11}
\Phi_{\vartheta}^{*}(z,\sigma,a;p,\lambda)&=\Phi_{1,\vartheta;1}(z,\sigma,a;p,\lambda)\notag\\
=&\sum_{n=0}^\infty\frac{B(\vartheta+n,1-\vartheta;p,\lambda)}{B(\vartheta,1-\vartheta)n!}\frac{z^n}{(n+a)^\sigma}
\end{align}
$(p\geq0\,,\lambda>0\,,\vartheta\in\mathbb{C}\,,a\in\mathbb{C}\setminus \mathbb{Z}_0^-,\, \sigma\in\mathbb{C}$ when $|z|<1$, $\Re(\sigma+\upsilon-\gamma-\vartheta)>1$ when $|z|=1)$.

\textbf{Case 3.} A limit case of  extended    Hurwitz-Lerch Zeta function $\Phi_{\vartheta}^{*}(z,\sigma,a;p,\lambda)$  defined by \eqref{hur9} is given by
\begin{align}\label{hur12}
\Phi_{\vartheta;\upsilon}^{*}(z,\sigma,a;p,\lambda)&=\lim_{|\gamma|\rightarrow\infty}\Big\{\Phi_{\gamma,\vartheta;\upsilon}\Big(\frac{z}{\gamma},\sigma,a;p,\lambda\Big)\Big\}\notag\\
=&\sum_{n=0}^\infty\frac{B(\vartheta+n,\upsilon-\vartheta;p,\lambda)}{B(\vartheta,\upsilon-\vartheta)n!}\frac{z^n}{(n+a)^\sigma}
\end{align}
$(p\geq0\,,\lambda>0\,,\vartheta\in\mathbb{C}\,,a\in\mathbb{C}\setminus \mathbb{Z}_0^-,\, \sigma\in\mathbb{C}$ when $|z|<1$, $\Re(\sigma+\upsilon-\gamma-\vartheta)>1$ when $|z|=1)$.
\begin{remark}
It is clear that,\\
(i) if $\lambda=1$ then \eqref{hur9} reduces to  \eqref{hur7}.\\
(ii) if $\lambda=1$ and $p=0$ then \eqref{hur9} gives \eqref{hur5}.\\
(iii) if $\lambda=\gamma=\upsilon=1$ and $p=0$ then \eqref{hur9} turns to \eqref{hur3}.\\
(iv) if $\lambda=\gamma=\vartheta=\upsilon=1$ and $p=0$ then \eqref{hur9} reduces to \eqref{hur1}.\\
\end{remark}
\section{Integral representations and derivative formulas}\label{sec-3}
In this section, we define varieties of integrals and a derivative formula for \eqref{hur9}.
\begin{theorem}\label{th1}
The following integral representation holds true:
\begin{eqnarray}\label{int1}
\Phi_{\gamma,\vartheta;\upsilon}(z,\sigma,a;p,\lambda)=\frac{1}{\Gamma(\sigma)}\int_0^\infty t^{\sigma-1}e^{-at}F_p^\lambda\Big(\gamma,\vartheta;\upsilon;ze^{-t}\Big)dt
\end{eqnarray}
$(p\geq0\,,\lambda>0\,,\vartheta\in\mathbb{C}\,,a\in\mathbb{C}\setminus \mathbb{Z}_0^-,\, \sigma\in\mathbb{C}$ \emph{when} $|z|<1$, $\Re(\sigma+\upsilon-\gamma-\vartheta)>1$ \emph{when} $|z|=1)$.
\end{theorem}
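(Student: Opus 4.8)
The plan is to begin from the series definition \eqref{hur9}, replace the factor $(n+a)^{-\sigma}$ by its Eulerian (Gamma-function) integral, and then interchange summation and integration so that the inner series collapses to the extended hypergeometric function $F_p^\lambda$ of \eqref{Shyper}.

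First I would invoke the classical identity
\[
\frac{1}{(n+a)^\sigma}=\frac{1}{\Gamma(\sigma)}\int_0^\infty t^{\sigma-1}e^{-(n+a)t}\,dt,
\]
valid whenever $\Re(\sigma)>0$ and $\Re(a)>0$ (so that $\Re(n+a)>0$ for every $n\ge 0$). Inserting this into \eqref{hur9} and writing $e^{-(n+a)t}=e^{-at}\,(e^{-t})^n$, I would factor $e^{-at}$ out of the $n$-sum and, after exchanging the order of summation and integration, obtain
\[
\Phi_{\gamma,\vartheta;\upsilon}(z,\sigma,a;p,\lambda)=\frac{1}{\Gamma(\sigma)}\int_0^\infty t^{\sigma-1}e^{-at}\left(\sum_{n=0}^\infty\frac{(\gamma)_n\,B_p^\lambda(\vartheta+n,\upsilon-\vartheta)}{B(\vartheta,\upsilon-\vartheta)\,n!}\,(ze^{-t})^n\right)dt.
\]
By the very definition \eqref{Shyper}, the bracketed series equals $F_p^\lambda(\gamma,\vartheta;\upsilon;ze^{-t})$, and the asserted representation \eqref{int1} follows immediately.

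The one step that genuinely needs care is justifying the term-by-term integration; this is the main obstacle. I would settle it by a dominated-convergence (equivalently Fubini) argument: since $0\le e^{-t}\le 1$ on $[0,\infty)$ we have $|ze^{-t}|\le|z|$, so the partial sums are bounded uniformly in $t$ by $t^{\sigma-1}e^{-at}$ times the numerical series $\sum_{n\ge 0}|(\gamma)_n|\,B_p^\lambda(\vartheta+n,\upsilon-\vartheta)\,|z|^n/(|B(\vartheta,\upsilon-\vartheta)|\,n!)$, which converges on the admissible $z$-domain under the stated parameter restrictions. Combined with $\int_0^\infty t^{\sigma-1}e^{-at}\,dt=\Gamma(\sigma)\,a^{-\sigma}<\infty$, this yields an integrable majorant and legitimizes the interchange, completing the proof. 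As a consistency check, I would remark that specializing $\lambda=1$ recovers the known representation \eqref{hur8}.
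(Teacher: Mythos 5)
Your proposal is correct and follows essentially the same route as the paper: insert the Euler integral $\frac{1}{(n+a)^\sigma}=\frac{1}{\Gamma(\sigma)}\int_0^\infty t^{\sigma-1}e^{-(n+a)t}\,dt$ into the series \eqref{hur9}, interchange summation and integration, and recognize the inner sum as $F_p^\lambda(\gamma,\vartheta;\upsilon;ze^{-t})$ via \eqref{Shyper}. Your write-up is in fact slightly more careful than the paper's (which drops the $(ze^{-t})^n$ factor in its intermediate display through an apparent typo, and does not justify the interchange), so no changes are needed.
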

\begin{proof}
Using the following Euler integral formula:
\begin{eqnarray}\label{int2}
\frac{1}{(n+a)^\sigma}=\frac{1}{\Gamma(\sigma)}\int_0^\infty t^{\sigma-1} e^{-(n+a)t}dt,
\end{eqnarray}
where $\min\{\Re(\sigma), \Re(a)\}>0; n\in\mathbb{N}_0$ in \eqref{hur9}, we have
\begin{align}\label{int3}
\Phi_{\gamma,\vartheta;\upsilon}(z,\sigma,a;p,\lambda)
=&\sum_{n=0}^\infty\frac{(\gamma)_nB(\vartheta+n,\upsilon-\vartheta;p,\lambda)}{B(\vartheta,\upsilon-\vartheta)n!}
\Big(\frac{1}{\Gamma(\sigma)}\int_0^\infty t^{\sigma-1} e^{-(n+a)t}dt\Big).
\end{align}
Interchanging the order of summation and integration under the condition stated in Theorem \ref{th1}, we get
\begin{align*}
\Phi_{\gamma,\vartheta;\upsilon}(z,\sigma,a;p,\lambda)
=&\frac{1}{\Gamma(\sigma)}\int_0^\infty t^{\sigma-1} e^{-(n+a)t}
\Big(\sum_{n=0}^\infty\frac{(\gamma)_nB(\vartheta+n,\upsilon-\vartheta;p,\lambda)}{B(\vartheta,\upsilon-\vartheta)n!}\Big)dt,
\end{align*}
by using \eqref{Shyper} we arrived the desired result.
\end{proof}
\begin{theorem}\label{th2}
The following integral representation holds true:
\begin{align}\label{int4}
\Phi_{\gamma,\vartheta;\upsilon}(z,\sigma,a;p,\lambda)&=\frac{1}{B(\vartheta,\upsilon-\vartheta)}\int_0^\infty\frac{x^{\vartheta-1}}{(1+x)^\upsilon}\notag\\
&\times E_\lambda\Big(-p(2+x+\frac{1}{x})\Big)
\Phi_\gamma^*\Big(\frac{zx}{1+x},\sigma,a\Big)dx
\end{align}
$$(p\geq0,\, \lambda>0\, , \Re(\upsilon)>\Re(\vartheta)>0)$$
and
\begin{align}\label{int5}
\Phi_{\gamma,\vartheta;\upsilon}(z,\sigma,a;p,\lambda)&=\frac{1}{B(\vartheta,\upsilon-\vartheta)}\int_0^\infty\frac{t^{\sigma-1}e^{-at}x^{\vartheta-1}}{(1+x)^\upsilon}\notag\\
&\times E_\lambda\Big(-p(2+x+\frac{1}{x})\Big)
\Big(1-\frac{zxe^{-t}}{1+x}\Big)^{-\gamma}dx
\end{align}
$$(p\geq0,\, \lambda>0\, , \Re(\upsilon)>\Re(\vartheta)>0\,, \min\{\Re(\sigma),\Re(a)\}>0),$$
provided that both the integrals converges.
\end{theorem}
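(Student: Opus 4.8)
The plan is to prove \eqref{int4} directly from the series definition \eqref{hur9}, and then to deduce \eqref{int5} from \eqref{int4} by substituting the classical integral representation \eqref{hur4} of the Goyal--Laddha function $\Phi_\gamma^*$. The whole argument rests on one change of variable together with recognizing a familiar power series.

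For \eqref{int4}, I would start from \eqref{hur9} and replace the extended beta factor $B_p^\lambda(\vartheta+n,\upsilon-\vartheta)$ by its defining integral \eqref{Sbeta} over $(0,1)$. The key move is the substitution $t=\frac{x}{1+x}$, so that $1-t=\frac{1}{1+x}$, $dt=\frac{dx}{(1+x)^2}$, and $\frac{1}{t(1-t)}=2+x+\frac1x$; this carries $(0,1)$ onto $(0,\infty)$ and, after collecting the powers of $1+x$, yields
\begin{equation*}
B_p^\lambda(\vartheta+n,\upsilon-\vartheta)=\int_0^\infty\frac{x^{\vartheta-1}}{(1+x)^{\upsilon}}\Big(\frac{x}{1+x}\Big)^{n}E_\lambda\Big(-p\big(2+x+\tfrac1x\big)\Big)\,dx.
\end{equation*}
Substituting this back into \eqref{hur9} and interchanging summation and integration, the factor $\frac{x^{\vartheta-1}}{(1+x)^\upsilon}E_\lambda(\cdots)$ is independent of $n$ and pulls outside, while the surviving series is
\begin{equation*}
\sum_{n=0}^\infty\frac{(\gamma)_n}{n!}\frac{1}{(n+a)^\sigma}\Big(\frac{zx}{1+x}\Big)^n=\Phi_\gamma^*\Big(\frac{zx}{1+x},\sigma,a\Big)
\end{equation*}
by the definition \eqref{hur3}. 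Dividing by $B(\vartheta,\upsilon-\vartheta)$ gives exactly \eqref{int4}.

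To pass to \eqref{int5}, I would invoke \eqref{hur4} for $\Phi_\gamma^*$ evaluated at the argument $\frac{zx}{1+x}$, namely
\begin{equation*}
\Phi_\gamma^*\Big(\frac{zx}{1+x},\sigma,a\Big)=\frac{1}{\Gamma(\sigma)}\int_0^\infty t^{\sigma-1}e^{-at}\Big(1-\frac{zx\,e^{-t}}{1+x}\Big)^{-\gamma}\,dt,
\end{equation*}
valid under $\min\{\Re(\sigma),\Re(a)\}>0$, and insert it into \eqref{int4}. Recombining the two $x$-factors with the $t$-integrand produces precisely the iterated $t,x$ integral displayed in \eqref{int5}.

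The main obstacle is the rigorous justification of interchanging summation and integration (and, for \eqref{int5}, the resulting two integrations), which I would handle by Fubini--Tonelli applied to the absolute values. Note first that $\frac{x}{1+x}\in[0,1)$, so $\big|\frac{zx}{1+x}\big|<|z|$ and the Goyal--Laddha series converges for each fixed $x$ on the stated $z$-range. For the $x$-integral, when $p>0$ the Mittag--Leffler factor $E_\lambda(-p(2+x+\tfrac1x))$ decays at both endpoints $x\to0^+$ and $x\to\infty$, forcing integrability automatically; when $p=0$ the integrand behaves like $x^{\vartheta-1}$ near $0$ and like $x^{\vartheta-1-\upsilon}$ near $\infty$, so absolute convergence reduces exactly to the hypotheses $\Re(\upsilon)>\Re(\vartheta)>0$. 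The $t$-integral is controlled by $\min\{\Re(\sigma),\Re(a)\}>0$, and absolute convergence of the underlying series on $|z|=1$ is secured by $\Re(\sigma+\upsilon-\gamma-\vartheta)>1$. Once absolute convergence of the double series/integral is verified on these ranges, Fubini legitimizes every interchange and the formal computation becomes a complete proof.
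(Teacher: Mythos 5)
Your proof is correct and follows essentially the same route as the paper: the paper quotes the $(0,\infty)$ integral representation of $B(\cdot,\cdot;p,\lambda)$ from \cite{Choi2017} (which you re-derive via the substitution $t=x/(1+x)$), interchanges sum and integral to recognize the Goyal--Laddha series and obtain \eqref{int4}, and then reaches \eqref{int5} by the same binomial/Euler-integral manipulation that underlies your substitution of \eqref{hur4}, while you additionally supply the Fubini--Tonelli justification the paper omits. Note only that both your derivation and the paper's own proof actually produce a double integral over $t$ and $x$ with prefactor $1/\bigl(\Gamma(\sigma)B(\vartheta,\upsilon-\vartheta)\bigr)$, so the single-integral form of \eqref{int5} as displayed (missing $1/\Gamma(\sigma)$ and the $dt$) is a typographical slip in the statement, not a defect in your argument.
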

\begin{proof}
Consider the following integral representation of extended beta function (see \cite{Choi2017})
\begin{eqnarray}\label{int6}
B(\sigma_1,\sigma_2;p,\lambda)=\int_0^\infty\frac{x^{\sigma_1-1}}{(1+x)^{\sigma_1+\sigma_2}}
E_\lambda\Big(-p(2+x+\frac{1}{x})\Big)dx, \, \Re(p)>0.
\end{eqnarray}
Substituting $\sigma_1=\vartheta+n$ and $\sigma_2=\upsilon-\vartheta$ in \eqref{int6}, we have
\begin{eqnarray}\label{int7}
B(\vartheta+n,\upsilon-\vartheta;p,\lambda)=\int_0^\infty\frac{x^{\vartheta+n-1}}{(1+x)^{\upsilon+n}}
E_\lambda\Big(-p(2+x+\frac{1}{x})\Big)dx, \, \Re(p)>0.
\end{eqnarray}
Using \eqref{int7} in \eqref{hur9}, we have
\begin{align}\label{int8}
\Phi_{\gamma,\vartheta;\upsilon}(z,\sigma,a;p,\lambda)&=
\frac{1}{B(\vartheta,\upsilon-\vartheta)}\int_0^\infty\frac{x^{\vartheta-1}}{(1+x)^{\upsilon}}\notag\\
&\times E_\lambda\Big(-p(2+x+\frac{1}{x})\Big)\Big(\sum_{n=0}^\infty\frac{(\gamma)_n}{n!}\frac{(xz)^n}{(1+x)^{n}(n+a)^\sigma}\Big)dx.
\end{align}
In view of \eqref{hur3}, we get the required result \eqref{int4}.

Now, from \eqref{int1} and \eqref{int8}, we have
\begin{align}
&\Phi_{\gamma,\vartheta;\upsilon}(z,\sigma,a;p,\lambda)\notag\\
=&\frac{1}{\Gamma(\sigma)}\int_0^\infty t^{\sigma-1} e^{-(n+a)t}
\Big(\sum_{n=0}^\infty\frac{(\gamma)_nB(\vartheta+n,\upsilon-\vartheta;p,\lambda)}{B(\vartheta,\upsilon-\vartheta)n!}\Big)dt\\
=&\frac{1}{\Gamma(\sigma)B(\vartheta,\upsilon-\vartheta)}\int_0^\infty\int_0^\infty\frac{x^{\vartheta-1}t^{\sigma-1} e^{-at}}{(1+x)^{\upsilon}}\notag\\
&\times E_\lambda\Big(-p(2+x+\frac{1}{x})\Big)\Big(\sum_{n=0}^\infty\frac{(\gamma)_n}{n!}\frac{(xze^{-t})^n}{(1+x)^{n}(n+a)^\sigma}\Big)dtdx.
\end{align}
By using the binomial expansion
$$(1-zt)^{-\alpha}=\sum_{n=0}^\infty\frac{(\alpha)_n(zt)^n}{n!},$$
we get the desired result.
\end{proof}
\begin{theorem}\label{th3}
The following integral representation holds true:
\begin{align}
\Phi_{\gamma,\vartheta;\upsilon}(z,\sigma,a;p,\lambda)=\frac{1}{\Gamma(\gamma)}\int_0^\infty t^{\gamma-1}e^{-t}\Phi_{\vartheta;\upsilon}^*(zt,\sigma,a;p,\lambda)dt
\end{align}
$(p\geq0,\,\lambda>0,\, \Re(\gamma)>0,\, \Re(a)>0$, \emph{when} $|z|\leq1 (z\neq1); \Re(\sigma)>1$, \emph{when} $z=1$), where $\Phi_{\vartheta;\upsilon}^*(zt,\sigma,a;p,\lambda)$ is the limiting case defined by \eqref{hur12}.
\end{theorem}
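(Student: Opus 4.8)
The plan is to unwind the Pochhammer factor $(\gamma)_n$ appearing in the defining series \eqref{hur9} through an Eulerian integral, and then to recognize the surviving series as $\Phi_{\vartheta;\upsilon}^*$ evaluated at the scaled argument $zt$. The key identity, valid for $\Re(\gamma)>0$, is
\begin{equation*}
(\gamma)_n=\frac{\Gamma(\gamma+n)}{\Gamma(\gamma)}=\frac{1}{\Gamma(\gamma)}\int_0^\infty t^{\gamma-1}\,t^n\,e^{-t}\,dt,
\end{equation*}
which is exactly the kernel $t^{\gamma-1}e^{-t}/\Gamma(\gamma)$ appearing in the asserted formula.

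First I would insert this representation into the series \eqref{hur9}. The factor $t^n$ then combines with $z^n$ to produce $(zt)^n$, so that the $n$-dependent part of each term becomes precisely the general term
\begin{equation*}
\frac{B(\vartheta+n,\upsilon-\vartheta;p,\lambda)}{B(\vartheta,\upsilon-\vartheta)\,n!}\frac{(zt)^n}{(n+a)^\sigma}
\end{equation*}
of $\Phi_{\vartheta;\upsilon}^*(zt,\sigma,a;p,\lambda)$ as given in \eqref{hur12}. Interchanging the order of summation and integration pulls the series inside the $t$-integral, and summing that inner series by means of \eqref{hur12} yields the claimed identity directly. The algebraic recognition step is immediate once the $t^n$ is absorbed into the argument.

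The principal obstacle is justifying the interchange of the infinite sum with the integral over $(0,\infty)$. Under the stated hypotheses $\Re(\gamma)>0$, $\Re(a)>0$, and $|z|\le 1$ (with $z\neq1$, or else $\Re(\sigma)>1$ when $z=1$), I would establish absolute convergence of the resulting double object and invoke Fubini's theorem. Concretely, one bounds the extended beta coefficients $B(\vartheta+n,\upsilon-\vartheta;p,\lambda)$, uses the decaying factor $e^{-t}$ to control the behaviour for large $t$ and the integrability of $t^{\gamma-1}$ near the origin (guaranteed by $\Re(\gamma)>0$) to control small $t$, and verifies that the corresponding series of integrals converges. This is the only step requiring genuine care; everything else reduces to substitution and reading off the definition \eqref{hur12}.
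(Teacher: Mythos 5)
Your proposal is correct and follows essentially the same route as the paper: both unwind $(\gamma)_n$ via the Eulerian integral $(\gamma)_n=\frac{1}{\Gamma(\gamma)}\int_0^\infty t^{\gamma+n-1}e^{-t}\,dt$, absorb $t^n$ into the argument to form $(zt)^n$, interchange sum and integral, and identify the inner series with $\Phi_{\vartheta;\upsilon}^*(zt,\sigma,a;p,\lambda)$ from \eqref{hur12}. Your added attention to justifying the interchange by absolute convergence is a welcome refinement of a step the paper passes over with only a reference to its stated hypotheses.
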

\begin{proof}
The integral representation of Pochhammer symbol $(\gamma)_n$ given in \cite{Sari2012} as
\begin{eqnarray}\label{int9}
(\gamma)_n=\frac{1}{\Gamma(\gamma)}\int_0^\infty t^{\gamma+n-1}e^{-t}dt.
\end{eqnarray}
The use of \eqref{int9} in \eqref{hur9} and interchanging the order of summation and integration under the condition of Theorem \ref{th3}, we have
 \begin{eqnarray}\label{int10}
\Phi_{\gamma,\vartheta;\upsilon}(z,\sigma,a;p,\lambda)=\frac{1}{\Gamma(\gamma)}\int_0^\infty t^{\gamma-1}e^{-t}\sum_{n=0}^\infty\frac{B(\vartheta+n,\upsilon-\vartheta;p,\lambda)}{B(\vartheta,\upsilon-\vartheta)}\frac{(zt)^n}{n!(n+a)^\sigma}dt.
\end{eqnarray}
Now, by using \eqref{hur12}, we get the desired proof.
\end{proof}
Next, we derive a derivative formula of $\Phi_{\gamma,\vartheta;\upsilon}(z,\sigma,a;p,\lambda)$.
\begin{theorem}
The following formula holds true
\begin{align}\label{der}
\frac{d^n}{dz^n}\{\Phi_{\gamma,\vartheta;\upsilon}(z,\sigma,a;p,\lambda)\}=\frac{(\gamma)_n(\vartheta)_n}{(\upsilon)_n}
\Phi_{\gamma+n,\vartheta+n;\upsilon+n}(z,\sigma,a+n;p,\lambda),\, (n\in\mathbb{N}_0).
\end{align}
\end{theorem}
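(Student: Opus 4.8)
The plan is to establish the case $n=1$ directly by differentiating the defining series (\ref{hur9}) term by term, and then to promote it to arbitrary $n$ by induction. Writing the series with summation index $k$, differentiation with respect to $z$ annihilates the $k=0$ term and brings down a factor $k$; after re-indexing $k\mapsto k+1$ the derivative becomes a power series in $z^{k}$ whose coefficients carry $(\gamma)_{k+1}/(k+1)!$, the shifted modified beta factor $B_p^\lambda(\vartheta+k+1,\upsilon-\vartheta)$, and the denominator $(k+1+a)^{\sigma}$. The task is then to recognize this re-indexed series as a scalar multiple of $\Phi_{\gamma+1,\vartheta+1;\upsilon+1}(z,\sigma,a+1;p,\lambda)$.

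Two elementary identities do the work. First, $(\gamma)_{k+1}=\gamma\,(\gamma+1)_k$ together with $(k+1)!=(k+1)\,k!$ cancels the factor $k+1$ created by differentiation and converts the Pochhammer symbol into the one based at $\gamma+1$. Second, and this is the key point, the normalizing constants must reorganize correctly: since
$$\frac{B(\vartheta+1,\upsilon-\vartheta)}{B(\vartheta,\upsilon-\vartheta)}=\frac{\Gamma(\vartheta+1)\,\Gamma(\upsilon)}{\Gamma(\vartheta)\,\Gamma(\upsilon+1)}=\frac{\vartheta}{\upsilon},$$
rewriting $1/B(\vartheta,\upsilon-\vartheta)$ as $(\vartheta/\upsilon)\cdot 1/B(\vartheta+1,\upsilon-\vartheta)$ extracts exactly the prefactor $\gamma\vartheta/\upsilon$. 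It is also essential to observe that under the simultaneous shift $(\vartheta,\upsilon)\mapsto(\vartheta+1,\upsilon+1)$ the difference $\upsilon-\vartheta$ is invariant, so the factor $B_p^\lambda(\vartheta+1+k,\upsilon-\vartheta)$ demanded by $\Phi_{\gamma+1,\vartheta+1;\upsilon+1}$ is precisely the one produced above, while the shift $a\mapsto a+1$ accounts for the $(k+1+a)^{\sigma}$ denominators. Assembling these pieces yields (\ref{der}) for $n=1$.

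For the inductive step I would assume (\ref{der}) for some $n\in\mathbb{N}_0$, apply $d/dz$ once more, and invoke the $n=1$ formula now applied to $\Phi_{\gamma+n,\vartheta+n;\upsilon+n}(z,\sigma,a+n;p,\lambda)$. This produces the extra factor $(\gamma+n)(\vartheta+n)/(\upsilon+n)$ and advances every parameter by one. Using $(\gamma)_n(\gamma+n)=(\gamma)_{n+1}$, and the analogous collapses for $(\vartheta)_n$ and $(\upsilon)_n$, the product of prefactors folds into $(\gamma)_{n+1}(\vartheta)_{n+1}/(\upsilon)_{n+1}$, completing the induction.

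The genuine subtlety is analytic rather than algebraic: differentiating (\ref{hur9}) term by term requires that the differentiated series converge locally uniformly on the region $|z|<1$ stated there, so that $d/dz$ and the summation may be interchanged. Since differentiation merely shifts the index and leaves the radius of convergence unchanged (the modified beta factors do not enlarge it, the governing ratio being $(\gamma)_k/k!$), the series and each of its derivatives converge uniformly on compact subsets of the unit disk, which legitimizes the interchange and is the one step I would treat with care.
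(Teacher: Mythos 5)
Your proposal is correct and follows essentially the same route as the paper: differentiate the defining series term by term, re-index, and use $(\gamma)_{k+1}=\gamma(\gamma+1)_k$ together with $B(\vartheta,\upsilon-\vartheta)=\tfrac{\upsilon}{\vartheta}B(\vartheta+1,\upsilon-\vartheta)$ (and the invariance of $\upsilon-\vartheta$ under the simultaneous shift) to obtain the $n=1$ case, then iterate. Your explicit induction and the remark on locally uniform convergence merely tighten the paper's ``continuing up to $n$ times'' step.
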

\begin{proof}
 Differentiation of \eqref{hur9} with respect to $z$ yields
\begin{align}\label{der1}
\frac{d}{dz}\{\Phi_{\gamma,\vartheta;\upsilon}(z,\sigma,a;p,\lambda)\}=
\sum_{n=1}^\infty\frac{(\gamma)_n}{(n-1)!}\frac{B(\vartheta+n,\upsilon-\vartheta;p,\lambda)}{B(\vartheta,\upsilon-\vartheta)}\frac{z^{n-1}}{(n+a)^\sigma}.
\end{align}
Replacing $n$ by $n+1$  and applying the following well known identities
$$ B(b,c-b)=\frac{c}{b}B(b+1,c-b)\,\, and\,\, (a)_{n+1}=a (a+1)_n$$
in the right hand side of \eqref{der1}, we have
\begin{align}\label{der2}
\frac{d}{dz}\{\Phi_{\gamma,\vartheta;\upsilon}(z,\sigma,a;p,\lambda)\}=\frac{\gamma\vartheta}{\upsilon}
\Phi_{\gamma+1,\vartheta+1;\upsilon+1}(z,\sigma,a+1;p,\lambda).
\end{align}
Again, differentiating \eqref{der2} with respect to $z$, we obtain
\begin{align}\label{der3}
\frac{d^2}{dz^2}\{\Phi_{\gamma,\vartheta;\upsilon}(z,\sigma,a;p,\lambda)\}=\frac{\gamma(\gamma+1)\vartheta(\vartheta+1)}{\upsilon(\upsilon+1)}
\Phi_{\gamma+2,\vartheta+2;\upsilon+2}(z,\sigma,a+2;p,\lambda).
\end{align}
Continuing up to $n$-times gives the required proof.
\end{proof}
\section{Mellin Transformation and generating relations}\label{sec-4}
In this section, we derive Mellin transformation and some generating relations for \eqref{hur9}.
Here, we recall the following well-known Mellin transform of integrable function as:
\begin{eqnarray}\label{Mel}
\mathfrak{M}\{f(t); t\rightarrow r\}=\int_0^\infty t^{r-1}f(t)dt.
\end{eqnarray}
\begin{theorem}\label{th5}
The Mellin transform of the extended Hurwitz-Lerch Zeta function is given by
\begin{align}\label{Mel2}
\mathfrak{M}\{\Phi_{\gamma,\vartheta;\upsilon}(z,\sigma,a;p,\lambda); p\rightarrow r\}=
\frac{\pi}{\sin(r\pi)}\frac{B(\vartheta+r,\upsilon-\vartheta+r)}{\Gamma(1-r\lambda)B(\vartheta,\upsilon-\vartheta)}\Phi_{\gamma,\vartheta+r;\upsilon+2r}(z,\sigma,a)
\end{align}
$$(\Re(r)>0\,\, \emph{and}\,\, \Re(\vartheta+r)>0).$$
\end{theorem}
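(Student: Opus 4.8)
The plan is to work directly from the series definition \eqref{hur9} rather than from any of the integral representations. Writing out
$$\mathfrak{M}\{\Phi_{\gamma,\vartheta;\upsilon}(z,\sigma,a;p,\lambda); p\rightarrow r\}=\int_0^\infty p^{r-1}\sum_{n=0}^\infty\frac{(\gamma)_nB_p^\lambda(\vartheta+n,\upsilon-\vartheta)}{B(\vartheta,\upsilon-\vartheta)n!}\frac{z^n}{(n+a)^\sigma}\,dp,$$
I would first interchange the summation and the $p$-integration (justified by absolute convergence under the stated conditions), which reduces the whole computation to evaluating the single quantity $\mathfrak{M}\{B_p^\lambda(\vartheta+n,\upsilon-\vartheta);p\to r\}$ for each fixed $n$.

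For that key step I would insert the integral representation \eqref{Sbeta} of the extended beta function and interchange the $p$- and $t$-integrations, so that everything hinges on the Mellin transform of the Mittag-Leffler function. Using the substitution $u=p/(t(1-t))$ together with the standard evaluation
$$\int_0^\infty u^{r-1}E_\lambda(-u)\,du=\frac{\Gamma(r)\Gamma(1-r)}{\Gamma(1-\lambda r)}=\frac{\pi}{\sin(r\pi)}\frac{1}{\Gamma(1-\lambda r)},\qquad 0<\Re(r)<1,$$
one obtains $\mathfrak{M}\{E_\lambda(-p/(t(1-t)));p\to r\}=(t(1-t))^r\,\frac{\pi}{\sin(r\pi)}\Gamma(1-\lambda r)^{-1}$. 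Feeding this back leaves a plain beta integral in $t$, which collapses to $B(\vartheta+n+r,\upsilon-\vartheta+r)$, so that
$$\mathfrak{M}\{B_p^\lambda(\vartheta+n,\upsilon-\vartheta);p\to r\}=\frac{\pi}{\sin(r\pi)}\frac{B(\vartheta+n+r,\upsilon-\vartheta+r)}{\Gamma(1-\lambda r)}.$$

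The final step is purely algebraic: substituting this back into the series and writing the beta functions as quotients of Gamma functions, I would split off the $n$-dependence using $\Gamma(\vartheta+r+n)=\Gamma(\vartheta+r)(\vartheta+r)_n$ and $\Gamma(\upsilon+2r+n)=\Gamma(\upsilon+2r)(\upsilon+2r)_n$. This turns the ratio $B(\vartheta+n+r,\upsilon-\vartheta+r)/B(\vartheta,\upsilon-\vartheta)$ into the constant $B(\vartheta+r,\upsilon-\vartheta+r)/B(\vartheta,\upsilon-\vartheta)$ times $(\vartheta+r)_n/(\upsilon+2r)_n$; pulling the constant out, the remaining sum $\sum_{n\ge0}\frac{(\gamma)_n(\vartheta+r)_n}{(\upsilon+2r)_n n!}\frac{z^n}{(n+a)^\sigma}$ is exactly $\Phi_{\gamma,\vartheta+r;\upsilon+2r}(z,\sigma,a)$ by \eqref{hur5}, giving \eqref{Mel2}.

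The main obstacle I anticipate is the Mittag-Leffler Mellin transform and the attendant bookkeeping of convergence strips. The evaluation $\int_0^\infty u^{r-1}E_\lambda(-u)\,du$ cannot be obtained by naive term-by-term integration of the series (the individual monomials are not Mellin-transformable), so it must be argued via the known analytic formula valid on $0<\Re(r)<1$; one must then check that the hypotheses $\Re(r)>0$ and $\Re(\vartheta+r)>0$ quoted in the statement indeed secure convergence of both the $u$-integral and the resulting beta integral $B(\vartheta+n+r,\upsilon-\vartheta+r)$, and that the two interchanges of sum and integral are legitimate on this strip.
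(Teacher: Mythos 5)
Your proposal follows essentially the same route as the paper's proof: interchange the sum with the $p$-integration, insert the integral representation of $B_p^\lambda$, reduce to the Mellin transform $\int_0^\infty v^{r-1}E_\lambda(-v)\,dv=\Gamma(r)\Gamma(1-r)/\Gamma(1-\lambda r)$ via the substitution $v=p/(t(1-t))$, apply Euler's reflection formula, and recombine the resulting beta function $B(\vartheta+n+r,\upsilon-\vartheta+r)$ into $\Phi_{\gamma,\vartheta+r;\upsilon+2r}(z,\sigma,a)$. Your final Pochhammer-symbol bookkeeping and your caveat about the strip $0<\Re(r)<1$ are in fact spelled out more carefully than in the paper, which leaves that last algebraic step implicit.
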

\begin{proof}
The Mellin Transformation of \eqref{hur9} is given by:
\begin{align}\label{Mel3}
\mathfrak{M}\{\Phi_{\gamma,\vartheta;\upsilon}(z,\sigma,a;p,\lambda); p\rightarrow r\}&=\int_0^\infty p^{r-1}\Phi_{\gamma,\vartheta;\upsilon}(z,\sigma,a;p,\lambda)dp\notag\\
&=\int_0^\infty p^{r-1}\Big(\sum_{n=0}^\infty\frac{(\gamma)_n}{n!}\frac{B(\vartheta+n,\upsilon-\vartheta;p,\lambda)}{B(\vartheta,\upsilon-\vartheta)}\frac{z^n}{(n+a)^\sigma}\Big)dp\notag\\
&=\frac{1}{B(\vartheta,\upsilon-\vartheta)}\sum_{n=0}^\infty\frac{(\gamma)_n}{n!}
\frac{z^n}{(n+a)^\sigma}\notag\\
&\times\int_0^\infty p^{r-1}B(\vartheta+n,\upsilon-\vartheta;p,\lambda)dp\notag\\
&=\frac{1}{B(\vartheta,\upsilon-\vartheta)}\sum_{n=0}^\infty\frac{(\gamma)_n}{n!}
\frac{(zt)^n}{(n+a)^\sigma}\int_0^\infty p^{r-1}\notag\\
&\times\int_0^1t^{\vartheta-1}(1-t)^{\upsilon-\vartheta-1}E_\lambda\Big(-\frac{p}{t(1-t)}\Big)dtdp.
\end{align}
Interchanging the order of integrations in \eqref{Mel3}, we have
\begin{align}\label{Mel4}
\mathfrak{M}\{\Phi_{\gamma,\vartheta;\upsilon}(z,\sigma,a;p,\lambda); p\rightarrow r\}&=\frac{1}{B(\vartheta,\upsilon-\vartheta)}\sum_{n=0}^\infty\frac{(\gamma)_n}{n!}
\frac{(zt)^n}{(n+a)^\sigma}\int_0^1t^{\vartheta-1}(1-t)^{\upsilon-\vartheta-1}\notag\\
&\times\Big(\int_0^\infty p^{r-1}E_\lambda\Big(-\frac{p}{t(1-t)}\Big)dp\Big)dt.
\end{align}
 The second integrand in \eqref{Mel4} becomes
\begin{eqnarray}\label{Mel5}
\int_0^\infty p^{r-1}E_\lambda\Big(-\frac{p}{t(1-t)}\Big)=t^r(1-t)^r\int_0^\infty v^{r-1}E_\lambda\Big(-v\Big)dv.
\end{eqnarray}
and recall
\begin{align}\label{gamma}
\int_{0}^{\infty}v^{r-1}E_{\lambda,\gamma}^{\delta}(-wv)dv=\frac{\Gamma(r)\Gamma(\delta-r)}{\Gamma(\delta)w^{r}
\Gamma(\gamma-r\lambda)}.
\end{align}
For $\gamma=\delta=1$ and $w=1$, \eqref{gamma} becomes
\begin{align}\label{gamma1}
\int_{0}^{\infty}v^{r-1}E_{\lambda}(-v)dv=\frac{\Gamma(r)\Gamma(1-r)}{\Gamma(1-r\lambda)}.
\end{align}
Using (\ref{gamma1}) in (\ref{Mel5}), we have
\begin{eqnarray}\label{Mel6}
\int_0^\infty p^{r-1}E_\lambda\Big(-\frac{p}{t(1-t)}\Big)=t^r(1-t)^r\frac{\Gamma(r)\Gamma(1-r)}{\Gamma(1-r\lambda)}.
\end{eqnarray}
Thus by using \eqref{Mel6} and the following Euler's reflection formula of Gamma function
\begin{align}
{\Gamma(r)\Gamma(1-r)}=\frac{\pi}{\sin(\pi r)},
\end{align}
in \eqref{Mel4}, we get the desired result.
\end{proof}
\begin{remark}

The  special case of \eqref{hur9} for $r=1$ gives the following integral representation
\begin{align}
\int_0^\infty \Phi_{\gamma,\vartheta;\upsilon}(z,\sigma,a;p,\lambda)dp=
\frac{\pi}{\sin(\pi)}\frac{B(\vartheta+1,\upsilon-\vartheta+1)}{\Gamma(1-\lambda)B(\vartheta,\upsilon-\vartheta)}\Phi_{\gamma,\vartheta+1;\upsilon+2}(z,\sigma,a)
\end{align}
\end{remark}
Next, we derive the generating function for the extended Hurwitz-Lerch Zeta function defined by \eqref{hur9}.
\begin{theorem}\label{th6}
The following generating function for $\Phi_{\gamma,\vartheta;\upsilon}(z,\sigma,a;p,\lambda)$ holds true:
\begin{align}\label{g}
\sum_{n=0}^\infty(\gamma)_n \Phi_{\gamma+n,\vartheta;\upsilon}(z,\sigma,a;p,\lambda)\frac{t^n}{n!}=(1-t)^{-\gamma}
\Phi_{\gamma+n,\vartheta;\upsilon}(\frac{z}{1-t},\sigma,a;p,\lambda)
\end{align}
$$(p\geq0\,, \lambda>0, \gamma\, , \vartheta\,, \upsilon\in\mathbb{C}\,\, \emph{and}\,\, |t|<1).$$
\end{theorem}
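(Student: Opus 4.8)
The plan is to expand the left-hand side by the series definition \eqref{hur9} of $\Phi_{\gamma+n,\vartheta;\upsilon}$, reorganize the resulting double sum via a Pochhammer identity, and then collapse the inner sum with the generalized binomial theorem. First I would write
\begin{align*}
\sum_{n=0}^\infty(\gamma)_n \Phi_{\gamma+n,\vartheta;\upsilon}(z,\sigma,a;p,\lambda)\frac{t^n}{n!}
=\sum_{n=0}^\infty\sum_{m=0}^\infty \frac{(\gamma)_n(\gamma+n)_m}{n!\,m!}\frac{B(\vartheta+m,\upsilon-\vartheta;p,\lambda)}{B(\vartheta,\upsilon-\vartheta)}\frac{z^m t^n}{(m+a)^\sigma},
\end{align*}
so that the two Pochhammer symbols enter only through the product $(\gamma)_n(\gamma+n)_m$.

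The key algebraic step is the identity $(\gamma)_n(\gamma+n)_m=(\gamma)_{m+n}=(\gamma)_m(\gamma+m)_n$, which lets me rewrite the coefficient and regroup so that, after interchanging the order of summation (legitimate by absolute convergence when $|t|<1$, with the parameters as restricted in \eqref{hur9}), the $m$-sum and the $n$-sum decouple into
\begin{align*}
\sum_{m=0}^\infty\frac{(\gamma)_m}{m!}\frac{B(\vartheta+m,\upsilon-\vartheta;p,\lambda)}{B(\vartheta,\upsilon-\vartheta)}\frac{z^m}{(m+a)^\sigma}\sum_{n=0}^\infty\frac{(\gamma+m)_n}{n!}t^n.
\end{align*}

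Then I would recognize the inner sum as the binomial series $\sum_{n=0}^\infty\frac{(\gamma+m)_n}{n!}t^n=(1-t)^{-(\gamma+m)}$, valid for $|t|<1$, which is the same expansion already invoked at the end of the proof of Theorem \ref{th2}. Factoring out $(1-t)^{-\gamma}$ and absorbing the leftover $(1-t)^{-m}$ into the power of $z$ (so that $z^m(1-t)^{-m}=(z/(1-t))^m$), the remaining series is precisely $\Phi_{\gamma,\vartheta;\upsilon}$ evaluated at $z/(1-t)$, giving the asserted formula. (Note that the subscript $\gamma+n$ on the right-hand side of \eqref{g} should read $\gamma$, the summation index $n$ no longer being free.)

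The only genuine subtlety is the interchange of the two summations: I expect the main work to lie in verifying that the double series converges absolutely under the stated hypotheses $|t|<1$ together with $|z|<1$ (or $|z|=1$ with $\Re(\sigma+\upsilon-\gamma-\vartheta)>1$), so that Fubini's theorem applies. This uses the asymptotic decay of $B(\vartheta+m,\upsilon-\vartheta;p,\lambda)$ in $m$ and the factor $(m+a)^{-\sigma}$ to dominate the $m$-sum, while the geometric factor $t^n$ controls the $n$-sum. Everything else reduces to the routine Pochhammer bookkeeping above.
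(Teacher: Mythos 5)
Your proposal is correct and follows essentially the same route as the paper's own proof: expand via \eqref{hur9}, apply the Pochhammer identity $(\gamma)_n(\gamma+n)_k=(\gamma)_k(\gamma+k)_n$, interchange the summations, and collapse the inner sum with the binomial series $(1-t)^{-\gamma-k}$. Your remark that the subscript $\gamma+n$ on the right-hand side of \eqref{g} should read $\gamma$ is also correct, and your attention to justifying the interchange of summation is a point the paper leaves implicit.
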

\begin{proof}
Let the left hand side of \eqref{g} be denoted by $\mathfrak{L}$, the from \eqref{hur9}, we have

 \begin{align}\label{g1}
\mathfrak{L}=\sum_{n=0}^\infty(\gamma)_n\Big\{\sum_{k=0}^\infty(\gamma+n)_k\frac{B(\vartheta+k,\upsilon-\vartheta;p,\lambda)}{B(\vartheta,\upsilon-\vartheta)}
\frac{z^k}{k!(k+a)^\sigma}\Big\}\frac{t^n}{n!}
\end{align}
Interchanging the order of summations and using the identity $(\gamma)_n(\gamma+n)_k=(\gamma)_k(\gamma+k)_n$ in \eqref{g1}, we have
 \begin{align}\label{g2}
\mathfrak{L}=\sum_{k=0}^\infty(\gamma)_k\frac{B(\vartheta+k,\upsilon-\vartheta;p,\lambda)}{B(\vartheta,\upsilon-\vartheta)}
\Big\{\sum_{n=0}^\infty(\gamma+k)_n\frac{t^n}{n!}\Big\}
\frac{z^k}{k!(k+a)^\sigma}
\end{align}
Now, using the following binomial expansion
\begin{eqnarray*}
(1-t)^{-\gamma-k}=\sum_{n=0}^\infty(\gamma+k)_n\frac{t^n}{n!}\,\, (|t|<1),
\end{eqnarray*}
and interpreting in term of \eqref{hur9} as a function of the form $\Phi_{\gamma+n,\vartheta;\upsilon}(\frac{z}{1-t},\sigma,a;p,\lambda)$, which completes the proof of Theorem \eqref{th6}.
\end{proof}
\begin{theorem}
The following generating function for $\Phi_{\gamma,\vartheta;\upsilon}(z,\sigma,a;p,\lambda)$ holds true:
\begin{align}\label{g3}
\sum_{n=0}^\infty\frac{(\sigma)_n}{n!} \Phi_{\gamma,\vartheta;\upsilon}(z,\sigma+n,a;p,\lambda)t^n=
\Phi_{\gamma,\vartheta;\upsilon}(z,\sigma,a-t;p,\lambda)
\end{align}
$$(p\geq0\,, \lambda>0, \gamma\, , \vartheta\,, \upsilon\in\mathbb{C}\,\,\, \emph{and}\,\,\, |t|<a;\,\,\sigma\neq1).$$
\end{theorem}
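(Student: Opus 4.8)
The plan is to expand the left-hand side using the defining series \eqref{hur9}, interchange the order of the two summations, and recognize the resulting inner sum as a binomial series that absorbs the entire dependence on $\sigma$ and $n$ into a single shifted argument $a-t$. Denoting the left-hand side of \eqref{g3} by $\mathfrak{L}$, I would first substitute \eqref{hur9} for $\Phi_{\gamma,\vartheta;\upsilon}(z,\sigma+n,a;p,\lambda)$ to obtain a double series in $n$ and $k$, then bring the $k$-summation outside and factor out the $n$-independent terms:
\begin{align*}
\mathfrak{L}=\sum_{k=0}^\infty\frac{(\gamma)_kB(\vartheta+k,\upsilon-\vartheta;p,\lambda)}{B(\vartheta,\upsilon-\vartheta)\,k!}\frac{z^k}{(k+a)^\sigma}\sum_{n=0}^\infty\frac{(\sigma)_n}{n!}\left(\frac{t}{k+a}\right)^n.
\end{align*}

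The key step is to evaluate the inner sum by the same generalized binomial expansion used in the proof of Theorem \ref{th6}, namely
\begin{align*}
\sum_{n=0}^\infty\frac{(\sigma)_n}{n!}\left(\frac{t}{k+a}\right)^n=\left(1-\frac{t}{k+a}\right)^{-\sigma}=\frac{(k+a)^\sigma}{(k+a-t)^\sigma}.
\end{align*}
Substituting this back cancels the factor $(k+a)^\sigma$ and replaces it by $(k+(a-t))^\sigma$, leaving precisely the series \eqref{hur9} that defines $\Phi_{\gamma,\vartheta;\upsilon}(z,\sigma,a-t;p,\lambda)$; this is the desired right-hand side.

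The main obstacle is justifying the interchange of summations together with the convergence of the inner binomial series uniformly in $k$. The hypothesis $|t|<a$ is exactly what is needed: for $a>0$ it yields $\left|t/(k+a)\right|\le |t|/a<1$ for every $k\in\mathbb{N}_0$, so each inner series converges and its common ratio stays bounded away from $1$. Combined with the absolute convergence of \eqref{hur9} in the stated parameter range, this licenses a Fubini--Tonelli rearrangement of the absolutely summable double series. I would therefore record the convergence condition carefully and invoke absolute convergence before performing the interchange, which is the only nonroutine point in the argument.
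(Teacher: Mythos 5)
Your proposal is correct and is essentially the paper's own argument run in the opposite direction: the paper starts from the right-hand side, factors $(k+a-t)^{-\sigma}=(k+a)^{-\sigma}\bigl(1-\tfrac{t}{k+a}\bigr)^{-\sigma}$, expands the binomial, and interchanges the sums to reach the left-hand side, whereas you start from the left-hand side and collapse the inner sum via the same binomial series. The only substantive addition on your part is the explicit justification of the interchange under $|t|<a$, which the paper omits.
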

\begin{proof}
Applying \eqref{hur9} to the right hand side of \eqref{g3}, we have
\begin{align}\label{g4}
\Phi_{\gamma,\vartheta;\upsilon}(z,\sigma,a-t;p,\lambda)&=\sum_{k=0}^\infty(\gamma)_k\frac{B(\vartheta+k,\upsilon-\vartheta;p,\lambda)}{B(\vartheta,\upsilon-\vartheta)}
\frac{z^k}{k!(k+a-t)^\sigma}\notag\\
=&\sum_{k=0}^\infty(\gamma)_k\frac{B(\vartheta+k,\upsilon-\vartheta;p,\lambda)}{B(\vartheta,\upsilon-\vartheta)}
\frac{z^k}{k!(k+a)^\sigma}\Big(1-\frac{t}{k+a}\Big)^{-\sigma}.
\end{align}
 Using the following binomial expansion
\begin{eqnarray*}
(1-t)^{-\gamma-k}=\sum_{n=0}^\infty(\gamma+k)_n\frac{t^n}{n!}\,\, (|t|<1),
\end{eqnarray*}
in \eqref{g4}, we have
\begin{align}
\Phi_{\gamma,\vartheta;\upsilon}(z,\sigma,a-t;p,\lambda)&
=\sum_{k=0}^\infty(\gamma)_k\frac{B(\vartheta+k,\upsilon-\vartheta;p,\lambda)}{B(\vartheta,\upsilon-\vartheta)}
\frac{z^k}{k!(k+a)^\sigma}\Big\{\sum_{n=0}^\infty\frac{(\sigma)_n}{n!}\frac{t^n}{(k+\sigma)^n}\notag\\
=&\sum_{n=0}^\infty\frac{(\sigma)_n}{n!}\Big\{\sum_{k=0}^\infty(\gamma)_k\frac{B(\vartheta+k,\upsilon-\vartheta;p,\lambda)}{B(\vartheta,\upsilon-\vartheta)}
\frac{z^k}{k!(k+a)^{\sigma+n}}\Big\}t^n
\end{align}
By making the use of \eqref{hur9}, we get the desired result.
\end{proof}
\section{Concluding Remark}
In this paper, we established a new extension of extended Hurwitz-Lerch Zeta functions which are earlier studied by many researchers cited therein literature. If we consider $\lambda=1$, then the results established here will reduce to the results studied by Parmar et al. \cite{Parmar}. Silmilarly, if we consider $\lambda=1$ and $p=0$ then the result will reduce to the work of Garg et al. \cite{Garg}.

{\bf Conflict of Interests}\\
The author(s) declare(s) that there is no conflict of interests regarding the
publication of this article.

 

\begin{thebibliography}{0}

\bibitem {Mubeen2017} M. Arshad, S. Mubeen, K. S. Nisar, G. Rahman, \emph{Extended Wright-Bessel function and its properties}, Commun. Korean Math. Soc. 33 (2018), No. 1, 143-155..
\bibitem {Arshad1} M. Arshad, J. Choi, S. Mubeen, K. S. Nisar, G. Rahman, \emph{A new extension of the Mittag-Leffler function}, Commun. Korean Math. Soc. 2018, (Accepted).

\bibitem {Chaudhry2001}  M. A. Chaudhry, S. M.  Zubair, \emph{On a Class of Incomplete Gamma Functions
with Applications}, Chapman and Hall, (CRC Press Company), Boca Raton, London,
New York and Washington, D. C., 2001.

\bibitem {Chaudhry1997}M. A. Chaudhry,  A.  Qadir, M.  Rafique, S. M.  Zubair,\emph{Extension of Euler’s beta function}, J. Comput. Appl. Math. 78 (1997) 19–32.

\bibitem {Chaudhry2004} M. A. Chaudhry, A. Qadir, H. M. Srivastava, R. B. Paris, \emph{Extended Hypergeometric
and Confluent Hypergeometric functions}, Appl. Math. Comput., 159 (2004) 589-602

\bibitem {Choi2014} J. Choi, A. K. Rathie, R. K. Parmar, \emph{Extension of extended beta, hypergeometric and confluent hypergeometric functions}, Honam Mathematical J. 36 (2014), No. 2, pp. 357-385.

\bibitem{Choi2008} J. Choi, D. S. Jang,  H. M. Srivastava, \emph{A generalization of the Hurwitz-Lerch Zeta function}, Integral Transforms Spec. Funct., 19, (2008), 65-79.

\bibitem {Erd1953} A. Erd\'{e}lyi,W. Magnus, F. Oberhettinger, and F.G. Tricomi, \emph{Higher Transcendental Functions}, Vol.1, McGraw-Hill, NewYork, Toronto, London, 1953.

\bibitem{Erd1955} A. Erd\'{e}lyi,W. Magnus, F. Oberhettinger, F.G. Tricomi, \emph{Higher Transcendental Functions}, Vol.1, McGraw-Hill, NewYork, Toronto, London, 1955.

\bibitem{Garg}M. Garg, K. Jain, S. L. Kalla, \emph{A further study of general Hurwitz-Lerch
Zeta function}, Algebras Groups Geom., 25, (2008), 311-319.

\bibitem{Goyal}S. P. Goyal, R. K. Laddha, \emph{On the generalized Zeta function and the gener-
alized Lambert function}, Ganita Sandesh, 11, (1997), 99-108.

\bibitem{Gauhar}G. Rahman,  K. S. Nisar, S. Mubeen, \emph{New generalization of extended beta and hypergeometric functions}, Preprints 2018, 2018020036 (doi: 10.20944/preprints201802.0036.v1).

\bibitem{Gauhar1} G. Rahman, K. S. Nisar, J. Choi, \emph{An extension of the Whittaker function}, arXiv:1801.08032 [math.CA].
\bibitem {Mubeena2017} S. Mubeen, G. Rahman, K. S. Nisar, J. Choi, M. Arshad, \emph{An extended beta function and its properties}, Far East Journal of Mathematical Sciences, 102(2017),  1545-1557.

\bibitem{Lin}S. D. Lin, H. M. Srivastava, Some families of the Hurwitz-Lerch Zeta functions and associated fractional derivative and other integral representations, Applied
Mathematics and Computation, 154, (2004), 725-733

\bibitem {Ozergin} E. \"{O}zergin, M. A. \"{O}zarslan, Abdullah Altın, \emph{Extension of gamma, beta and hypergeometric functions}, J.  Comput.  Appl. Math., 235 (2011), 4601-4610.

\bibitem {Ozarslan}M. A. \"{O}zarslan, B. Yılmaz, \emph{The extended Mittag-Leffler function and its properties}, Journal of Inequalities and Applications, 2014:85, (2014).

\bibitem {Parmar} R. K. Parmar, \emph{On a certain extension of Hurwitz-Lerch Zeta function}, Analele Universit\v{a}ții de Vest, Timișoara. Seria matematica-informatic\v{a}, LII, 2(2014), 157-170.

\bibitem {Parmar2013} R. K. Parmar, \emph{A new generalization of gamma, beta, hypergeometric and confluent hypergeometric functions}, LE MATEMATICHE Vol. LXVIII (2013) – Fasc. II, pp. 33–52

\bibitem{Sari2001}H. M. Srivastava, J. Choi, \emph{Series Associated with the Zeta and Related Functions}, Kluwer, Acedemic Publishers, Dordrecht, Boston and London, 2001.

\bibitem{Sari2012}H. M. Srivastava, J. Choi, \emph{Zeta and $q$-Zeta Functions and Associated Series
and Integrals}, Elsevier Science, Publishers, Amsterdam, London and New York, 2012.

\bibitem{Choi2017}M. Shadab, S. Jabee, J. Choi, \emph{An extension of beta function and its application}, Far East Journal of Mathematical Sciences, 103(2018), No. 1, pp 235-251.

\end{thebibliography}
\end{document}